\documentclass[12pt]{article}
\setlength\unitlength{1cm}
\setlength\topmargin{-1cm}
\setlength\oddsidemargin{-0.2in}
\setlength\textwidth{17cm}
\setlength\textheight{24cm}

\usepackage{graphicx,amssymb,amsmath,amsthm}
\usepackage{comment,cite,color}
\usepackage{algorithmic}
\usepackage{mathrsfs}
\usepackage[ruled]{algorithm}
\usepackage{time} %for displaying of the time
\input epsf
\clubpenalty=9999   %to eliminate orphans
\widowpenalty=9999  %to eliminate widows
\newtheorem{theorem}{Theorem}

\newtheorem{definition}{Definition}

\newtheorem{remark}{Remark}
\newtheorem{corollary}{Corollary}

\def\Z{\mathbb{Z}}
\def\N{\mathbb{N}}

\DeclareMathOperator{\Span}{span}

\def\Del{\Delta}
\def\tDel{\tilde{\Del}}

\def\be{\begin{equation}}
\def\ee{\end{equation}}

\title{Adaptive non-uniform B-spline dictionaries on a compact interval}
\author{Laura Rebollo-Neira and Zhiqiang Xu\thanks{
Permanent address:
LSEC,  Academy of Mathematics and System Sciences,
Chinese Academy of Sciences, Beijing, 100080, China.}\\
Mathematics, Aston University, Birmingham B4 7ET, UK}
\date{}
\begin{document}
\maketitle
\begin{abstract}
Non-uniform B-spline dictionaries on a compact
interval are discussed. For each given partition,
dictionaries of B-spline functions
for the corresponding spline space  are constructed.
It is asserted that, by dividing the given partition into subpartitions and
joining together the bases for the concomitant
subspaces, slightly redundant dictionaries of B-splines  functions
are obtained. Such dictionaries are proved to
span the spline space
associated to the given partition. The
proposed  construction is  shown to
be potentially useful for the purpose of sparse signal representation.
With that goal in mind, spline spaces specially adapted to produce
a sparse representation of a given signal are considered.
\end{abstract}

\section{Introduction}
A representation in the form of a linear superposition of elements
of a vector space is said to be {\em sparse} if the number of
elements in the superposition is small, in comparison to the
dimension of the corresponding space. The interest for sparse
representations has enormously increased the last few years, in
large part due to their convenience for signal processing techniques
and the results produced by the theory of Compressed Sensing with
regard to the reconstruction of sparse signals from non-adaptive
measurements \cite{cs1,cs2,cs3,cs4,cs5}. Furthermore, the classical
problem of expressing a signal as a linear superposition of elements
taken from an orthogonal basis has been extended to consider the
problem of expressing a signal as a linear superposition of
elements, called  {\em{atoms}}, taken from a redundant set, called
{\em {dictionary}} \cite{Mal98}. The corresponding signal
approximation in terms of highly correlated atoms is said to be {\em
{highly nonlinear}} and has been proved relevant to signal
processing applications. Moreover, a formal mathematical setting for
highly nonlinear approximations is being developed. As a small
sample of relevant literature let us mention
\cite{Dev98,Tem99,GN04}.

In regard to sparse approximations
there are two main problems to be looked at; one is in relation to
the design of suitable algorithms for finding the sparse approximation, 
and other the construction of the dictionaries endowing
the approximation with the property of sparsity. In this
communication we consider the sparse representation matter for the
large class of signals which are amenable to  satisfactory
approximation in spline spaces \cite{Uns99,chuib}. Given a signal, we
have the double goal of a) finding a spline space for approximating
the signal and b) constructing those dictionaries for the space
which are capable of providing a sparse representation of such a
signal. In order to
achieve both aims we first discuss the construction of
dictionaries of B-spline functions for non-uniform partitions, because
the  usual choice, the B-spline basis for the space, is not expected to
yield sparse representations.

In a previous  publication \cite{Bsplinedic}
a prescription for constructing B-spline
dictionaries on the compact interval is advised by restricting
considerations to uniform partitions (cardinal spline spaces).
Since our aim entails to relax this restriction,  we are forced to look at the
problem from a different perspective. Here we divide
the partition into subpartitions and construct the dictionary
by joining together the bases for the subspaces associated to each
subpartition. The
resulting dictionary is proved to span the spline space for the
 given non-uniform partition. Consequently, the uniform case considered in
 \cite{Bsplinedic} arises as a particular case of this general
 construction.
The capability of the proposed nonuniform dictionaries
to produce sparse representations is illustrated by a number of
examples.

The letter is organized as follows: Section 2 introduces splines
spaces and gives the necessary definitions. The property of
splines spaces which provides us with the foundations for the
construction of the proposed dictionaries is proven in this
section (c.f. Theorem 2). For a fixed partition, the actual
constructions of non-uniform B-spline dictionaries is discussed in
Section 3. Section 4 addresses the problem of finding the
appropriate partition giving rise to the spline space suitable for
approximating a given signal. In the same section a number of
examples are presented, which illustrate an important feature of
dictionaries for the adapted spaces. Namely, they may render a
very significant gain in the sparseness of the representation of
those signals which are well approximated in the corresponding
space. The conclusions are drawn in Section 5.

\section{Background and notations}
We refer to the fundamental books \cite{schum,chui,boor} for a complete
treatment of splines.
Here we simply introduce the adopted notation and
the basic definitions which are needed for presenting our
results.
\begin{definition}
Given a finite closed interval $[c,d]$ we define
a {\em partition} of $[c,d]$ as the finite set of points
\begin{equation}\label{Delta}
\Del:=\{x_i\}_{i=0}^{N+1}, N\in\N,\,\,\text{such that} \,\,
c=x_0<x_1<\cdots<x_{N}<x_{N+1}=d.
\end{equation}
We further define $N$ subintervals $I_i, i=0,\dots,N$ as:
$I_i=[x_i,x_{i+1}), i=0,\dots,N-1$ and $I_N=[x_N,x_{N+1}]$.
\end{definition}

\begin{definition}\label{splinespace}
Let $\Pi_{m}$ be the
space of polynomials  of
degree smaller or equal to $m\in\N_0=\N\cup\{0\}$.
Let $m$ be a positive integer and define
\begin{equation}
S_m(\Del)=\{f\in C^{m-2}[c,d]\ : \ f|_{I_i}\in\Pi_{m-1},
i=0,\dots,N\},
\end{equation}
where  $f|_{I_i}$ indicates the
restriction of the function $f$ on the
interval ${I_i}$.
\end{definition}
The standard result established  by the next theorem
is essential for our purpose.
\begin{theorem}\rm(\cite{schum}, pp.111) \label{th:bas}
Let \begin{equation} \Del:=\{x_i\}_{i=0}^{N+1},\,\,
N\in\N,\,\,\text{such that} \,\, c=x_0<x_1<\cdots<x_{N}<x_{N+1}=d.
\end{equation}
Then
$$
S_m(\Delta)\,\,=\,\, {\rm span}\{1,x,\ldots,x^{m-1},
(x-x_i)^{m-1}_+, i=1,\ldots,N\},
$$
where $(x-x_i)^{m-1}_+=(x-x_i)^{m-1}$ for $x-x_i>0$ and $0$
otherwise.
\end{theorem}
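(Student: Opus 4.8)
The plan is to prove the asserted equality by establishing the two inclusions separately. For the inclusion ${\rm span}\{1,\ldots,x^{m-1},(x-x_i)^{m-1}_+\}\subseteq S_m(\Del)$ I would first check that each proposed generator actually lies in the spline space. The monomials $1,x,\ldots,x^{m-1}$ are global polynomials of degree $\le m-1$, hence trivially elements of $C^{m-2}[c,d]$ with every restriction in $\Pi_{m-1}$. For a truncated power $(x-x_i)^{m-1}_+$ the restriction to each subinterval is again a polynomial in $\Pi_{m-1}$ (either the zero polynomial or $(x-x_i)^{m-1}$), so the only nontrivial point is the smoothness across the single breakpoint $x_i$. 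Differentiating $k$ times gives, up to a multiplicative constant $\frac{(m-1)!}{(m-1-k)!}$, the function $(x-x_i)^{m-1-k}_+$, which is continuous at $x_i$ precisely when $m-1-k\ge 1$, i.e. for $k\le m-2$; the jump first appears in the $(m-1)$-st derivative. Thus $(x-x_i)^{m-1}_+\in C^{m-2}[c,d]$ but is not $C^{m-1}$, and it belongs to $S_m(\Del)$.

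For the reverse inclusion $S_m(\Del)\subseteq{\rm span}\{\ldots\}$ I would argue constructively, peeling off one term per knot from left to right. Given $f\in S_m(\Del)$, its restriction to $I_0=[x_0,x_1)$ is some $p_0=\sum_{j=0}^{m-1}a_jx^j\in\Pi_{m-1}$; subtracting this global polynomial yields $g:=f-p_0\in S_m(\Del)$ with $g|_{I_0}\equiv 0$. Now $g$ is $C^{m-2}$ across $x_1$ and vanishes identically on the whole interval $I_0$ immediately to the left of $x_1$, so all one-sided derivatives of $g$ up to order $m-2$ vanish at $x_1$; hence $g|_{I_1}$ is a polynomial of degree $\le m-1$ with a zero of order $m-1$ at $x_1$, which forces $g|_{I_1}(x)=b_1(x-x_1)^{m-1}$ for a unique scalar $b_1$. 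Consequently $g$ and $b_1(x-x_1)^{m-1}_+$ agree on $I_0\cup I_1$, so $g-b_1(x-x_1)^{m-1}_+$ lies again in $S_m(\Del)$ while vanishing on $I_0\cup I_1$. Iterating this step across $x_2,\ldots,x_N$ produces scalars $b_2,\ldots,b_N$ and, after $N$ steps, exhibits $f$ as $\sum_{j=0}^{m-1}a_jx^j+\sum_{i=1}^{N}b_i(x-x_i)^{m-1}_+$, as required.

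The step I expect to carry the whole argument is the one just highlighted: a polynomial of degree $\le m-1$ whose value and first $m-2$ derivatives vanish at $x_i$ must be a scalar multiple of $(x-x_i)^{m-1}$. This is where the $C^{m-2}$-regularity of the space and the degree bound $m-1$ conspire to make the truncated powers exactly the correct generators, and it must be combined carefully with the observation that the \emph{identical} vanishing of $g$ on an entire left-neighbouring interval (not merely at the point $x_i$) is what supplies all $m-1$ one-sided derivative conditions. As an independent check one can count dimensions: piecewise polynomials of degree $\le m-1$ on the $N+1$ pieces form a space of dimension $(N+1)m$, the $C^{m-2}$ matching imposes $m-1$ conditions at each of the $N$ interior knots, giving $\dim S_m(\Del)=(N+1)m-N(m-1)=m+N$, which equals the number of proposed generators; together with their linear independence (obtained by restricting a vanishing combination successively to $I_0,I_1,\ldots$ and eliminating the coefficients knot by knot) this furnishes an alternative route to the same conclusion.
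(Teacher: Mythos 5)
Your proof is correct, but note that the paper does not prove this statement at all: it is quoted as a known result from Schumaker's book (p.~111), so there is no in-paper argument to compare against. What you have written is essentially the standard textbook derivation: the forward inclusion by checking that $(x-x_i)^{m-1}_+$ is $C^{m-2}$ with a jump appearing only in the $(m-1)$-st derivative, and the reverse inclusion by the left-to-right peeling argument, where the key point --- correctly identified by you --- is that the \emph{identical} vanishing of $g$ on the whole interval to the left of a knot supplies all $m-1$ one-sided derivative conditions, forcing $g$ restricted to the next interval to be a scalar multiple of $(x-x_i)^{m-1}$. Your closing dimension count $\dim S_m(\Del)=(N+1)m-N(m-1)=m+N$, together with the linear independence of the $m+N$ generators, recovers the slightly stronger basis statement actually established in the cited reference; the only fine point worth a remark is the degenerate case $m=1$, where $C^{m-2}=C^{-1}$ imposes no continuity and your argument degenerates gracefully (the truncated powers become step functions and the peeling step matches constants).
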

We are now ready to prove the theorem from which our proposal will
naturally arise.

\begin{theorem}
Suppose that $\Delta_1$ and $\Delta_2$ are two partitions of
$[c,d]$. It holds to be true that
$$S_m(\Delta_1) +S_m(\Delta_2) = S_m(\Delta_1 \cup \Delta_2).$$
\end{theorem}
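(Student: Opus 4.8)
The plan is to reduce the whole statement to the explicit spanning-set description of $S_m(\Delta)$ furnished by Theorem~\ref{th:bas}. For a partition $\Delta$ of $[c,d]$, write $\Delta^\circ$ for its set of interior knots, i.e. the points lying strictly between $c$ and $d$. Theorem~\ref{th:bas} then reads
\[
S_m(\Delta) = \Span\Big(\{1,x,\ldots,x^{m-1}\}\cup\{(x-\xi)^{m-1}_+ : \xi\in\Delta^\circ\}\Big).
\]
First I would apply this identity to each of the three partitions $\Delta_1$, $\Delta_2$ and $\Delta_1\cup\Delta_2$, so that all three spaces are displayed as spans of the monomials $1,x,\ldots,x^{m-1}$ together with truncated powers anchored at the relevant interior knots.

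Next I would invoke the elementary fact that, for any two subsets $A,B$ of a vector space, $\Span(A)+\Span(B)=\Span(A\cup B)$. Taking $A$ to be the generating set of $S_m(\Delta_1)$ and $B$ that of $S_m(\Delta_2)$, this shows that $S_m(\Delta_1)+S_m(\Delta_2)$ is the span of the monomials $1,\ldots,x^{m-1}$ together with the functions $(x-\xi)^{m-1}_+$ for $\xi\in\Delta_1^\circ\cup\Delta_2^\circ$, the shared monomials simply coalescing. The one remaining point is to verify that the interior knots of the refined partition satisfy $(\Delta_1\cup\Delta_2)^\circ=\Delta_1^\circ\cup\Delta_2^\circ$; this holds because $c$ and $d$ are the common endpoints of both partitions, so they remain the endpoints of the union and every other point of $\Delta_1\cup\Delta_2$ is interior. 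The generating set of $S_m(\Delta_1\cup\Delta_2)$ therefore coincides with that of $S_m(\Delta_1)+S_m(\Delta_2)$, and equality of the two spaces follows at once.

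I do not expect a genuine obstacle here, since the argument is essentially bookkeeping with spanning sets; the only things requiring care are the corner cases. I would first check that $\Delta_1\cup\Delta_2$, once its points are listed in increasing order, is again a partition in the sense of Definition~1, which is immediate as it is a finite point set with minimum $c$ and maximum $d$. I would also note the degenerate case in which a partition has no interior knots (so $S_m$ reduces to $\Pi_{m-1}$ and the truncated-power part of its generating set is empty), for which the span identity still holds verbatim with an empty index set. Finally, knots common to $\Delta_1$ and $\Delta_2$ cause no trouble precisely because the whole argument is phrased in terms of \emph{sets} of knots: repeated points are identified rather than counted twice, which is exactly what makes $(\Delta_1\cup\Delta_2)^\circ=\Delta_1^\circ\cup\Delta_2^\circ$ the correct matching of generators.
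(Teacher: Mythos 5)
Your proposal is correct and follows essentially the same route as the paper: both apply Theorem~\ref{th:bas} to express each spline space as the span of the monomials together with truncated powers at the interior knots, and then use the identity $\Span(A)+\Span(B)=\Span(A\cup B)$. Your version is slightly more careful than the paper's in spelling out that the interior knots of $\Delta_1\cup\Delta_2$ are exactly $\Delta_1^\circ\cup\Delta_2^\circ$, but this is a refinement of the same argument, not a different one.
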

\begin{proof}It stems from Theorem 1 and the basic result of linear algebra
establishing  that for $A_1$ and $A_2$ two sets such that
$S_1= \Span\{A_1\}$ and $S_2= \Span\{A_2\}$, one has $S_1 +S_2=
\Span\{A_1\cup A_2\}$.

Certainly, from Theorem 1 and for
$$A_1:=\{1,x,\ldots,x^{m-1},
(x-x_i)^{m-1}_+, x_i \in \Delta_1\}\quad {\text{and}} \quad
A_2:=\{1,x,\ldots,x^{m-1}, (x-x_i)^{m-1}_+, x_i \in \Delta_2\}$$
we have: $S_m(\Delta_1)=\Span\{A_1\}$,\,
$S_m(\Delta_2)=\Span\{A_2\}$. Hence
\begin{eqnarray*}
S_m(\Delta_1) + S_m(\Delta_2)&= &\Span\{A_1 \cup A_2\}\\
&= &\Span \{1,x,\ldots,x^{m-1}, (x-x_i)^{m-1}_+, x_i \in \Delta_1
\cup \Delta_2\}
\end{eqnarray*}
so that,  using Theorem 1 on the right hand side,  the
proof is concluded.
\end{proof}
The next corollary is a  direct consequence of the above theorem.
\begin{corollary}\label{cr:1}
Suppose that $\Delta_j,j=1,\ldots,n$ are  partitions of $[c,d]$.
Then
$$
S_m(\Delta_1) +\cdots + S_m(\Delta_n)\,\,=\,\,S_m(\cup_{j=1}^n\Delta_j).
$$
\end{corollary}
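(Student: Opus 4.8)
The corollary generalizes Theorem~2 from two partitions to $n$ partitions, so the natural plan is a straightforward induction on $n$.

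\medskip

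The plan is to proceed by induction on the number $n$ of partitions. The base case $n=1$ is the trivial identity $S_m(\Delta_1)=S_m(\Delta_1)$, and the case $n=2$ is precisely the statement of Theorem~2. For the inductive step, I would assume the result holds for $n-1$ partitions, that is,
$$
S_m(\Delta_1)+\cdots+S_m(\Delta_{n-1})=S_m\!\left(\cup_{j=1}^{n-1}\Delta_j\right).
$$
Adding $S_m(\Delta_n)$ to both sides and using associativity of the sum of subspaces, the left-hand side becomes
$$
S_m\!\left(\cup_{j=1}^{n-1}\Delta_j\right)+S_m(\Delta_n).
$$

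\medskip

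The key observation that makes this work is that $\cup_{j=1}^{n-1}\Delta_j$ is itself a partition of $[c,d]$: since each $\Delta_j$ contains the endpoints $c$ and $d$ and is a finite ordered set of points in $[c,d]$, any finite union of such sets is again a finite ordered set of points in $[c,d]$ containing $c$ and $d$, hence satisfies Definition~1. Consequently Theorem~2 applies directly with the two partitions $\Delta_1'=\cup_{j=1}^{n-1}\Delta_j$ and $\Delta_2'=\Delta_n$, yielding
$$
S_m\!\left(\cup_{j=1}^{n-1}\Delta_j\right)+S_m(\Delta_n)=S_m\!\left(\left(\cup_{j=1}^{n-1}\Delta_j\right)\cup\Delta_n\right)=S_m\!\left(\cup_{j=1}^{n}\Delta_j\right),
$$
which completes the induction.

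\medskip

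I do not anticipate any serious obstacle here, since the corollary is essentially a routine iteration of Theorem~2. The only point requiring a moment of care—and the one I would make explicit—is verifying that the intermediate union $\cup_{j=1}^{n-1}\Delta_j$ genuinely qualifies as a partition in the sense of Definition~1, so that Theorem~2 is legitimately invoked at each step; without this closure property the induction would not be well-posed. Everything else follows mechanically from the associativity of subspace addition.
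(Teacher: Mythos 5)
Your proof is correct and matches the paper's intent: the paper states the corollary as a direct consequence of Theorem~2 without writing out the induction, and your argument simply makes that iteration explicit, including the (easy but worthwhile) check that a finite union of partitions of $[c,d]$ is again a partition.
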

\section{Building B-spline dictionaries}
Let us start by recalling that an
{\em extended partition}
with single inner knots associated with $S_m(\Del)$ is a
set $\tDel=\{y_i\}_{i=1}^{2m+N}$ such that
$$y_{m+i}=x_i,\,\,i=1,\ldots,N,\,\, x_1<\cdots<x_N$$
and the first and last $m$  points $y_1\leq \cdots \leq y_{m} \leq
c,\quad d \leq y_{m+N+1}\leq \cdots \leq y_{2m+N}$ can be
arbitrarily chosen.

With each fixed extended partition $\tDel$ there is associated a
unique B-spline basis for $S_m(\Del)$, that we denote as
$\{B_{m,j}\}_{j=1}^{m+N}$. The B-spline $B_{m,j}$ can be defined
by the recursive formulae \cite{schum}:
\begin{eqnarray*}
B_{1,j}(x)&=& \begin{cases}
1, & t_j\leq x<t_{j+1},\\
0, & {\rm otherwise,}
\end{cases} \\
B_{m,j}(x) &=&
\frac{x-y_j}{y_{j+m-1}-y_j}B_{m-1,j}(x)+\frac{y_{j+m}-x}{y_{j+m}-y_{j+1}}B_{m-1,j+1}(x).
\end{eqnarray*}
The following theorem paves the way for the construction of
dictionaries for $S_m(\Delta)$. We use the symbol $\#$  to indicate
the cardinality of a set.

\begin{theorem}\label{th:dic}
Let $\Delta_j,\,j=1,\ldots,n$ be  partitions of $[c,d]$ and
$\Delta=\cup_{j=1}^n \Delta_j$.  We denote the B-spline basis for
$S_m(\Delta_j)$ as $\{B_{m,k}^{(j)}:k=1,\ldots,m+\#\Delta_j\}$.
Accordingly, a dictionary, ${\mathcal
D}_m(\Delta: \cup_{j=1}^n\Del_j)$, for $S_m(\Delta)$ can be
constructed as
$$
{\mathcal D}_m(\Delta: \cup_{j=1}^n\Del_j)\,\, :=\,\,
\cup_{j=1}^n \{B_{m,k}^{(j)}:k=1,\ldots, m+\#\Delta_j \},
$$
so as to  satisfy
$$
{\rm span}\{{\mathcal D}_m(\Delta: \cup_{j=1}^n\Del_j)
\}\,\,=\,\,S_m(\Delta).
$$
When $n=1$, ${\mathcal D}_m(\Delta:\Del_1)$ is reduced to the
B-spline basis of $S_m(\Delta)$.
\end{theorem}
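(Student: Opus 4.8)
The plan is to reduce the statement to Corollary \ref{cr:1} by means of the same elementary linear-algebra identity already exploited in the proof of Theorem 2, namely that the span of a union of sets equals the sum of the spans of the individual sets. All the genuine spline-theoretic content has in fact already been isolated in Theorem \ref{th:bas} and Corollary \ref{cr:1}, so the argument here should be short.

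First I would record the defining property of each constituent family: for every fixed $j$ the set $\{B_{m,k}^{(j)}:k=1,\ldots,m+\#\Delta_j\}$ is, by construction, a B-spline basis of $S_m(\Delta_j)$, and hence $\Span\{B_{m,k}^{(j)}:k=1,\ldots,m+\#\Delta_j\}=S_m(\Delta_j)$. It is worth remarking that this holds for \emph{every} admissible choice of the outer knots $y_1\le\cdots\le y_m\le c$ and $d\le y_{m+N+1}\le\cdots\le y_{2m+N}$ in the extended partition $\tDel$, so the identity is insensitive to those arbitrary choices.

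Next I would apply the span-of-a-union identity to the sets $A_j:=\{B_{m,k}^{(j)}:k=1,\ldots,m+\#\Delta_j\}$. Since ${\mathcal D}_m(\Delta:\cup_{j=1}^n\Del_j)=\cup_{j=1}^n A_j$, this yields $\Span\{{\mathcal D}_m(\Delta:\cup_{j=1}^n\Del_j)\}=\sum_{j=1}^n \Span\{A_j\}=\sum_{j=1}^n S_m(\Delta_j)$. Invoking Corollary \ref{cr:1} to identify the right-hand side with $S_m(\cup_{j=1}^n\Delta_j)=S_m(\Delta)$ then delivers the required span equality.

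Finally, the case $n=1$ is immediate: here $\Delta=\Delta_1$ and the dictionary consists of nothing but the single family $\{B_{m,k}^{(1)}:k=1,\ldots,m+\#\Delta_1\}$, which is by definition the B-spline basis of $S_m(\Delta_1)=S_m(\Delta)$. I do not anticipate any real obstacle here; the only point requiring a moment's care is the observation just made, that each B-spline family spans its subspace regardless of the arbitrary outer knots, so that the whole chain of equalities is valid for any concrete realisation of the dictionary.
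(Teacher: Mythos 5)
Your proposal is correct and follows essentially the same route as the paper: each B-spline family spans $S_m(\Delta_j)$, the span of the union equals the sum of the spans, and Corollary \ref{cr:1} identifies that sum with $S_m(\Delta)$. The only addition is your (valid) remark that the argument is insensitive to the choice of outer knots in each extended partition.
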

\begin{proof} It immediately follows from Corollary \ref{cr:1}. Indeed,
\begin{eqnarray*}
 {\rm span}\{{\mathcal D}_m(\Delta: \cup_{j=1}^n\Del_j)\}&
 =&{\rm span}\{\cup_{j=1}^n
\{B_{m,k}^{(j)}:k=1,\ldots, m+\#\Delta_j \} \}\\
&=&S_m(\Delta_1)+ \cdots+S_m(\Delta_1)= S_m(\cup_{j=1}^n
\Delta_j)=S_m(\Delta).
\end{eqnarray*}
\end{proof}
\begin{remark}
Note that the number of functions in the above defined dictionary is
equal to  $n\cdot m+\sum_{j=1}^n\#\Delta_j$, which is larger than
$\dim S_m(\Delta)=m+\#\Delta$. Hence, excluding the trivial case
$n=1$, the dictionary constitutes a redundant dictionary for
$S_m(\Delta)$.
\end{remark}
According to Theorem \ref{th:dic}, to build a dictionary for
$S_m(\Delta)$ we need to choose $n$-subpartitions $\Delta_j\in
\Delta$  such that $\cup_{j=1}^n\Del_j=\Del$. This gives a  great deal of
freedom for the actual construction of a non-uniform B-spline
dictionary. Fig.~1 shows some examples which are produced by
generating a random partition of $[0,4]$ with 6 interior knots.
From an arbitrary  partition
$$
\Del\,\,:=\,\, \{0=x_0<x_1<\cdots <x_{6}<x_{7}=4\},
$$
we generate two subpartitions as
$$
\Del_1\,\,:=\,\, \{0=x_0<x_1<x_3<x_{5}<x_{7}=4\}, \quad
\Del_2\,\,:=\,\, \{0=x_0<x_2<x_4<x_{6}<x_{7}=4\}$$ and join
together the B-spline basis for $\Del_1$ (light lines in the right
graphs of Fig.~1)  and $\Del_2$ (dark lines in the same graphs)
\begin{figure}[!ht]
\begin{center}
\epsfxsize=7cm\epsfbox{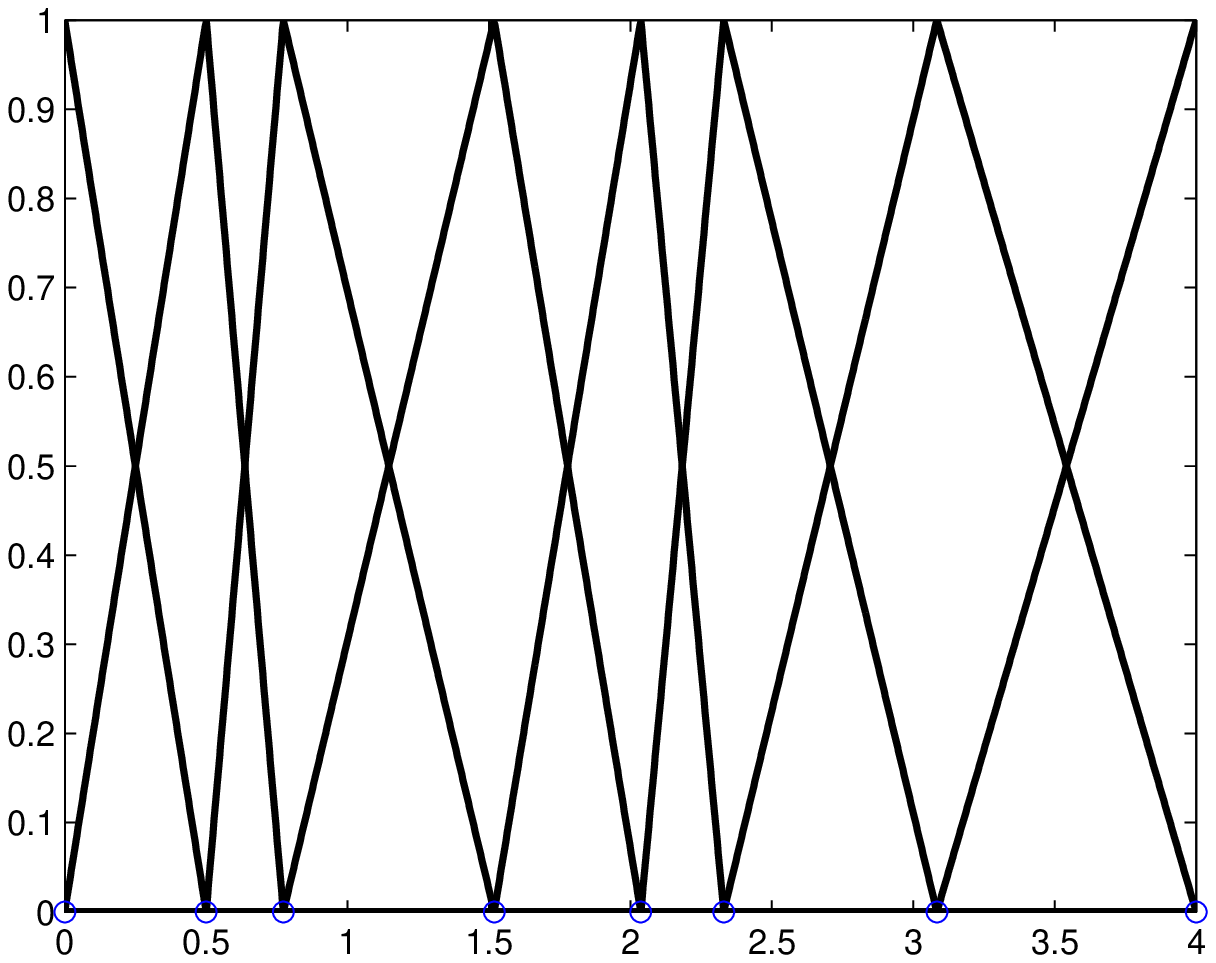}\,\,\,\,\,\,\,\,\, \epsfxsize=7cm\epsfbox{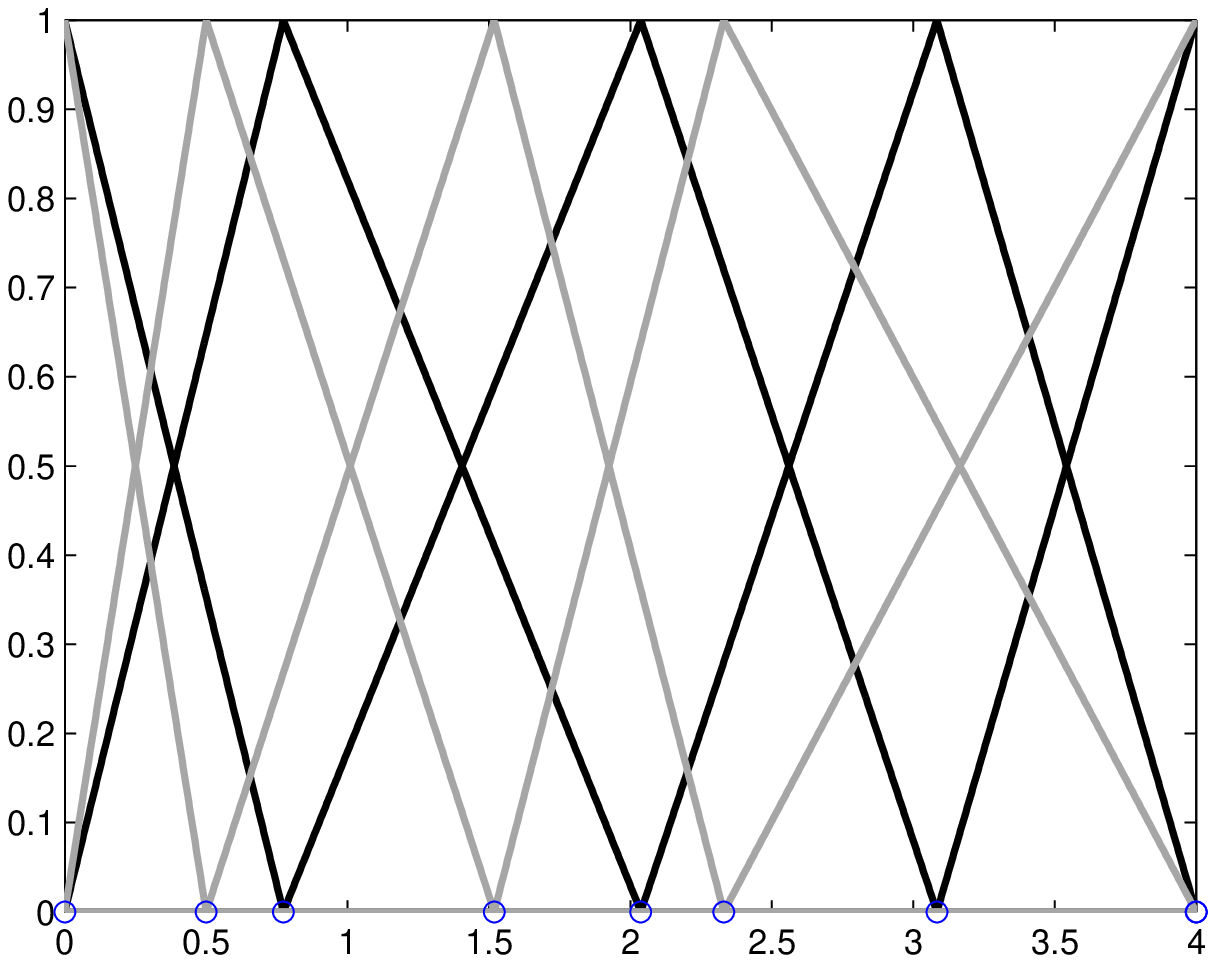}\\
\epsfxsize=7cm\epsfbox{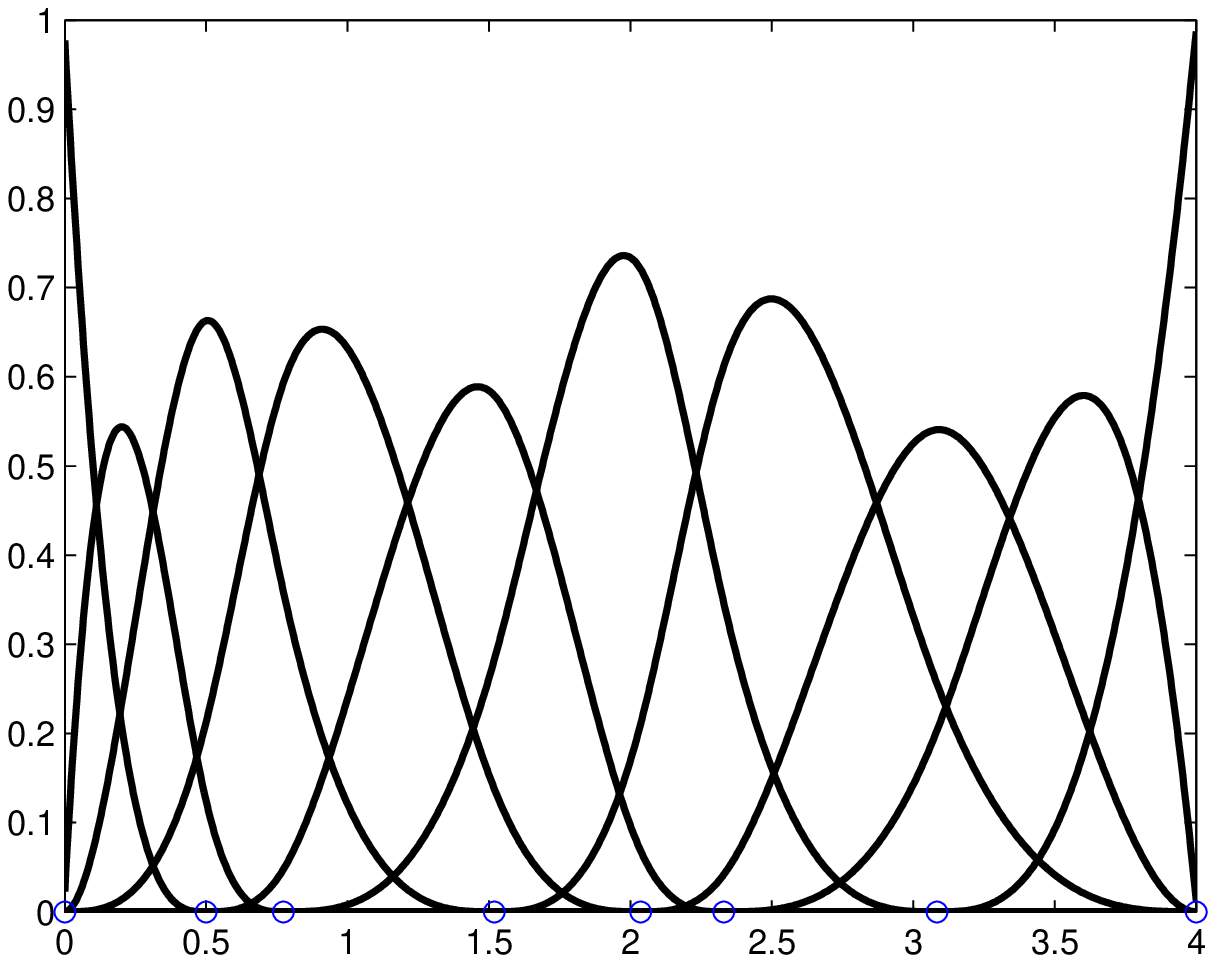}\,\,\,\,\,\,\,\,\,
\epsfxsize=7cm\epsfbox{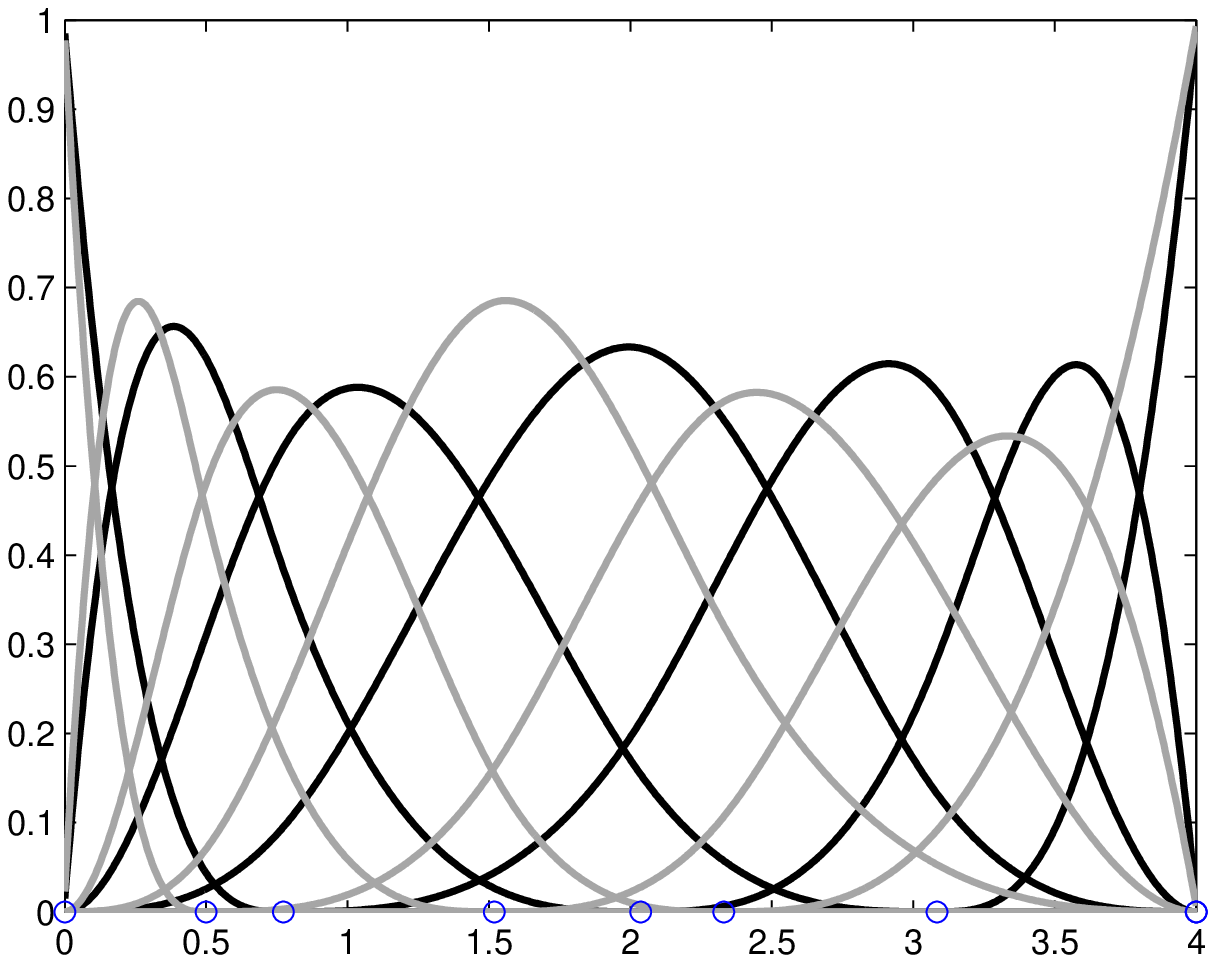}
\end{center}
\caption{\small{Examples of bases (graphs on the left) and the
corresponding dictionaries (right graphs) for a random partition
$\Del$ consisting of 6 interior knots. The top graphs correspond to
linear B-splines ($m=2$). The bottom graphs involve cubic B-splines
($m=4$). The top left graph depicts the B-spline basis for
$S_2(\Del)$ whereas the right one depicts the B-spline dictionary
${\mathcal D}_2(\Del:\Del_1 \cup \Del_2)$ arising by merging the
B-spline basis for $S_2(\Del_1)$ and for $S_2(\Del_2)$ (light and
dark lines, respectively). The bottom graphs have the same
description as the top graphs, but correspond to $S_4(\Del)$ and
${\mathcal D}_4(\Delta:\Delta_1\cup\Delta_2).$}}
\end{figure}
\begin{remark} As should be expected,
the cardinal B-spline dictionaries introduced in \cite{Bsplinedic}
arise here as particular cases of the proposed construction. In
order to show this we use $\Del_b$ to denote an equidistant
partition of $[c,d]$ with distance $b$ between adjacent points such
that $(d-c)/b\in \Z$, i.e., $\Del_b$  is composed by the inner knots
$x_j=c+jb,\,j=1,\ldots, (d-c)/b$. We also consider the partition
$\Del_{j_0,b'}$ with knots $x_{j}=c+j_0b+jb'$, where  $b'/b\in \Z$
and $j_0$ is a fixed integer in $[0,b'/b-1]$. Since
$\cup_{j_0=0}^{{b'/b-1}} \Del_{j_0,b'}=\Del_b$, we can build a
dictionary as
$$
{\mathcal D}_m(\Delta_b: \cup_{j=0}^{b'/b-1} \Del_{j,b'})\,\, =\,\,
\cup_{j_0=0}^{{b'/b-1}}\{B_{m,k}^{(j_0)}:k=1,\ldots,
m+\#\Delta_{j_0,b'} \},
$$
where $B_{m,k}^{(j_0)},k=1,\ldots,m+\#\Del_{j_0,b'}$ is the
cardinal B-spline basis  for the subspace
determined by the partition $\Del_{j_0,b'}$. This
yields a cardinal B-spline dictionary as proposed in
\cite{Bsplinedic}.
\end{remark}
\section{Application to sparse signal representation}
Given a signal, $f$ say,  we address  now the issue of determining a
partition $\Delta$, and sub-partitions $\Delta_j,j=1,\ldots, n$,
such that:  a) $\cup_{j=1}^n\Delta_j=\Delta$ and b) the partitions are
suitable for generating a sparse representation of the
signal in hand. As a first step we propose to tailor the partition
to the signal $f$ by setting $\Del$ taking into account
the critical points of the curvature function
of the signal, i.e.,
$$
T\,\,:=\,\,\{t: \left(\frac{f''}{(1-f'^2)^{3/2}}\right)'(t)=0\}.
$$
Usually the entries in $T$  are choosen as the initial knots of
$\Del$.  In order to obtain more knots we apply subdivision
between consecutive knots in $T$  thereby obtaining a partition
$\Del$ with the decided number of knots.

Because most signals are processed with digital computers, one
normally has to deal with a numerical representation of a signal
in the form of sampling points. Thus, another
problem to be addressed is how to compute the entries in
$T$ from  the sequence of points
$f(kh),k=1,\ldots,n$, where $h$ is the step length in the
discretization. The algorithm below  outlines a procedure for
accomplishing the task.
\begin{algorithm}
\begin{algorithmic}
 \STATE {\bf Input:} the interval $[c,d]$, the discrete signal
$f(c+kh),k=0,\ldots,n$, the subdivision level $l$.
 \STATE {\bf
Output:} The partition $\Del$.
 \STATE $\Del=\{ \}$
  \FOR { $k=2:n-3$} \FOR {$t=0:2$}
  \STATE  $df=(f((k+t+1)h)-f((k+t)h))/h$
  \STATE $ddf=(f((k+t+1)h)+f((k+t-1)h))-2f((k+t)h))/h^2$
   \STATE $c_t=ddf/(1-df^2)^{3/2}$ \ENDFOR

\IF{$|c_0|<|c_1|$  {\rm and} $|c_1|>|c_2|$}
   \STATE $\Del=\Del \cup \{c+(k+1)h\}$
\ENDIF
 \ENDFOR
  \STATE{\rm Set} $N\,\,=\,\,\#\Delta$
\STATE {\rm Set} $\Del\,\,=\,\,\Del\cup \{c,d \}$
 \STATE {\rm Sort the entries in
$\Delta$:}
 $$
c= x_0<x_1<\cdots <x_{N+1}=d
 $$
 \FOR{ $k=0:N$}
\STATE $\Del=\Del\cup \{x_k+t(x_{k+1}-x_k)/l: t=1,\ldots,l-1\}$
\ENDFOR
\end{algorithmic}
\caption{\small{Computing the partition $\Del$: $T(f,l)$.}}
\end{algorithm}
According to Theorem \ref{th:dic}, in order to build a dictionary
for $S_m(\Delta)$ we need to choose $n$-subpartitions
$\Delta_j\in \Delta$  such that $\cup_{j=1}^n\Del_j=\Del$. {{As an
example we suggest a simple method for producing $n$-subpartitions
$\Delta_j\in \Delta$, which is used in the numerical simulations of
the next section. Considering  the partition
$\Del=\{x_0,x_1,\ldots,x_{N+1}\}$ such that $c=x_0<x_1<\cdots
<x_{N+1}=d$, for each integer $j$ in $[1,n]$  we set
$$
\Del_j\,\,:=\,\, \{c,d\}\cup \{x_{k} : k\in [1,N] \mbox{ and }
k\!\!\mod n=j-1\},
$$}}
e.g. if $N=10$ and $n=3$, we have $ \Del_1\,\,=\,\, \{c,  x_3,
x_6, x_9, d\}, \,\,\Del_2\,\,=\,\, \{c, x_1, x_4, x_7, x_{10},
d\}$ $\,\,\,$ $\Del_3\,\,=\,\, \{c, x_2, x_5, x_8, d\}.  $

It is easy to see  that the above defined partitions satisfy
$\cup_{j=1}^n\Del_j=\Del$.

{\subsection{Numerical examples}} We produce here three examples
illustrating the potentiality of the proposed dictionaries for
achieving sparse representations by nonlinear techniques.  The
signal we consider are the following:

\begin{itemize}

\item A chirp signal, $f_1=\cos(2\pi x^2)$, $ x\in [0, 8],$
plotted in the top left of Fig.~1.

\item A seismic signal $f_2$ plotted in the
top right graph of Fig.~1.  This signal was taken
from the WaveLab802 Toolbox.  It is
acknowledged there that the signal is distributed throughout
the seismic industry as a test dataset.

\item A cosine function of random phase $f_3= \cos(8\pi x + \phi(x)),$
where $\phi(x)$ is the piecewise constant function depicted in the
bottom right of Fig.~1. The left graph corresponds to
the signal.

\end{itemize}

\begin{figure}[!ht]
\label{f2}
\begin{center}
\epsfxsize=7.5cm\epsfbox{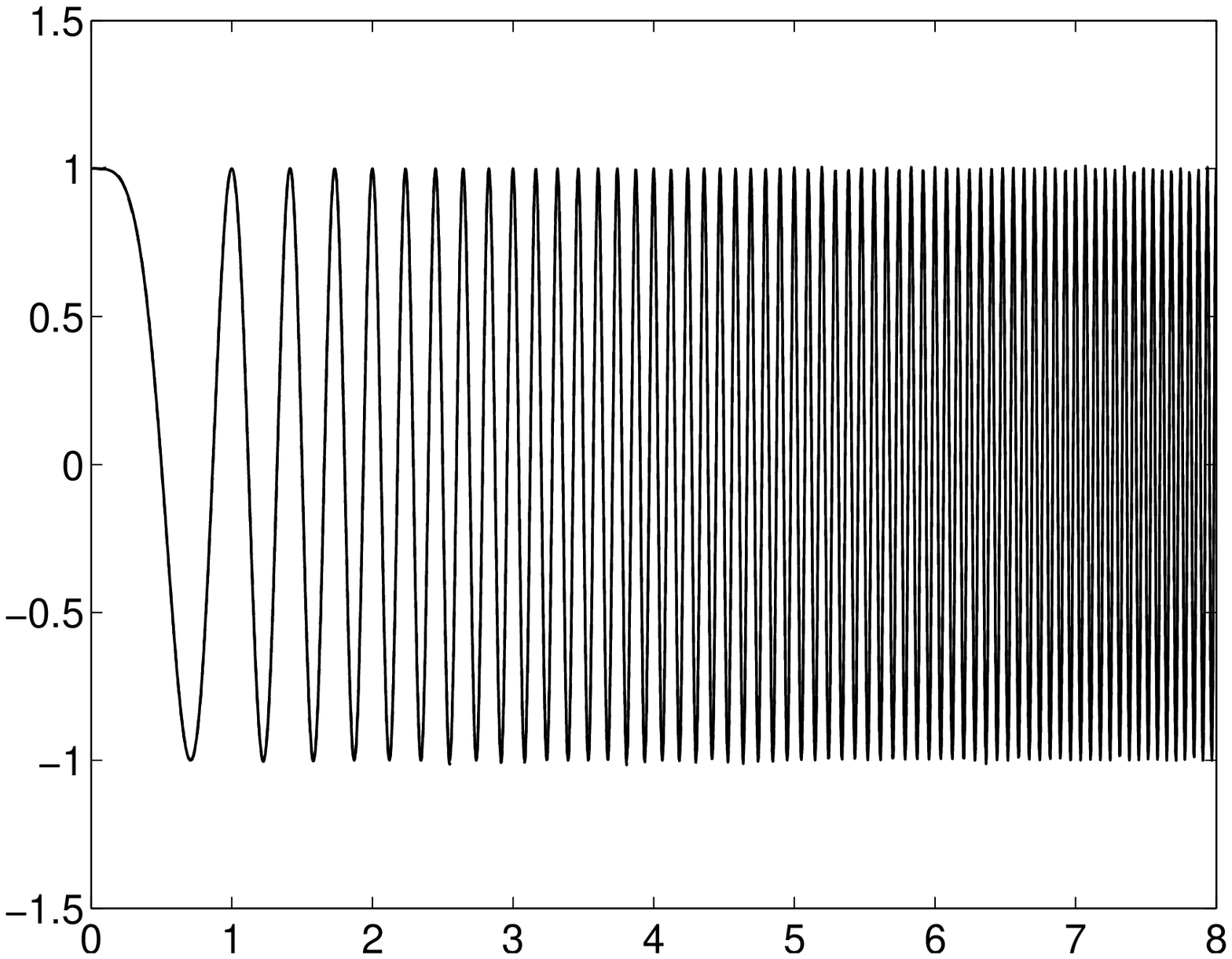}
\epsfxsize=7.5cm\epsfbox{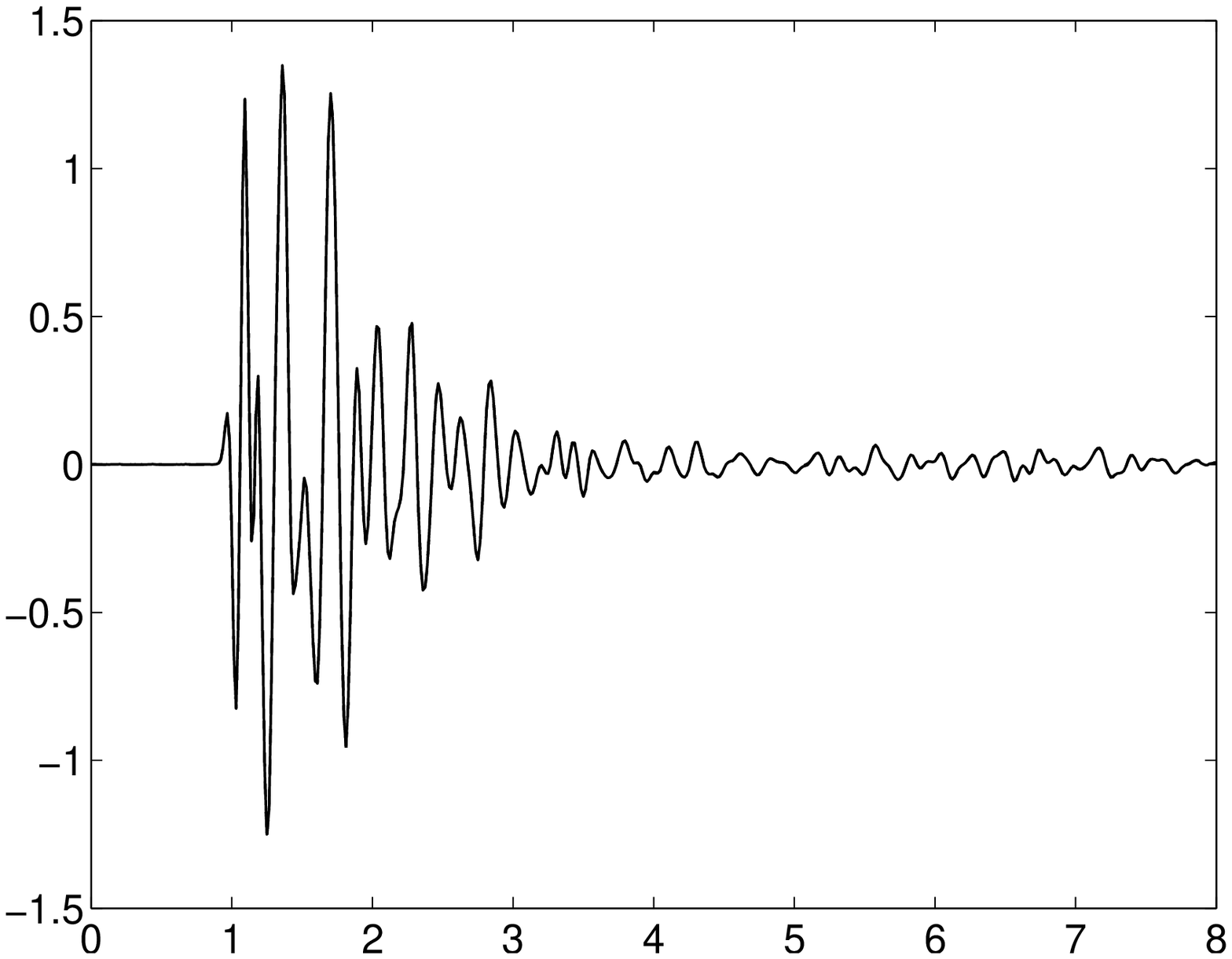}\\
\epsfxsize=7.5cm\epsfbox{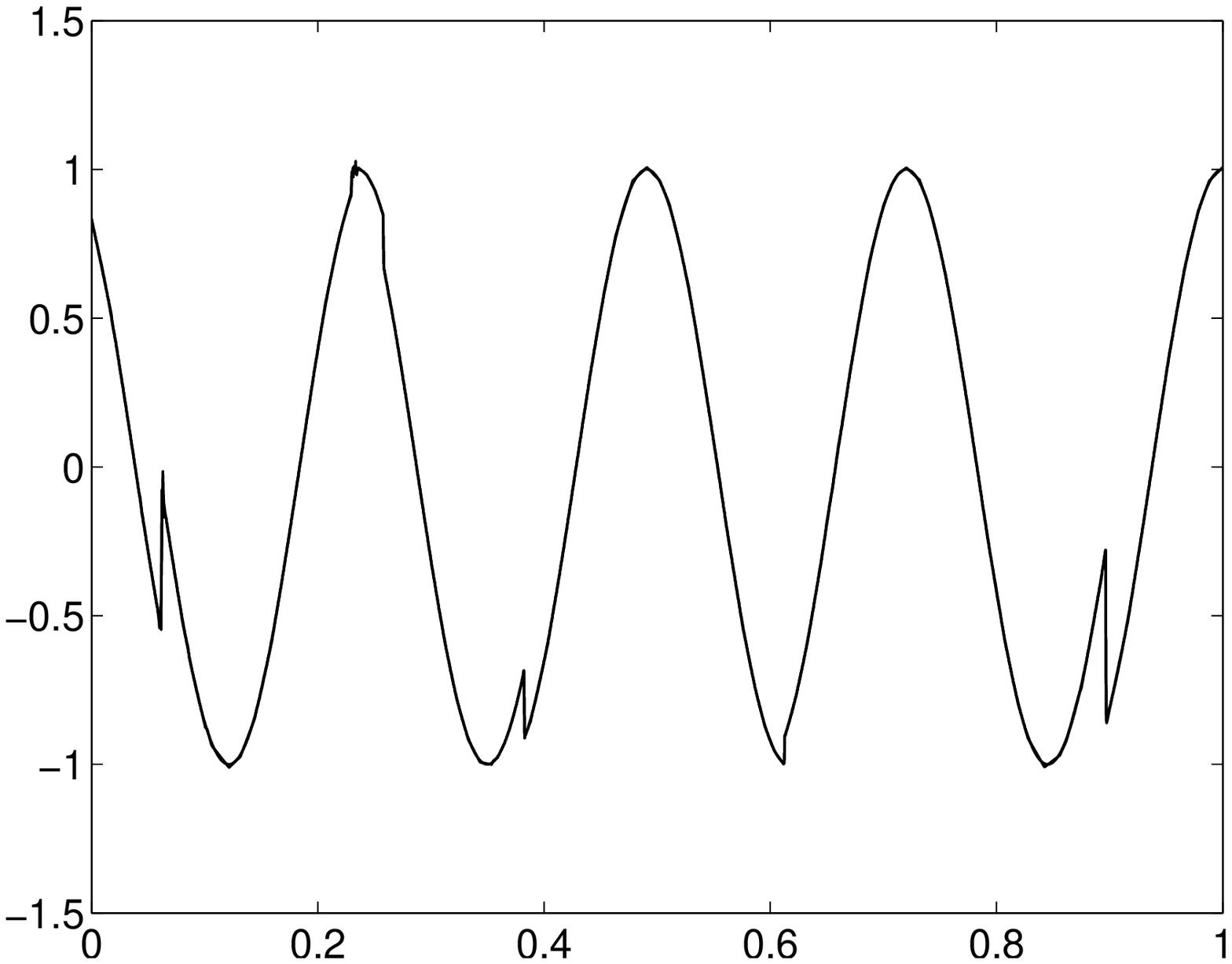}
\epsfxsize=7.5cm\epsfbox{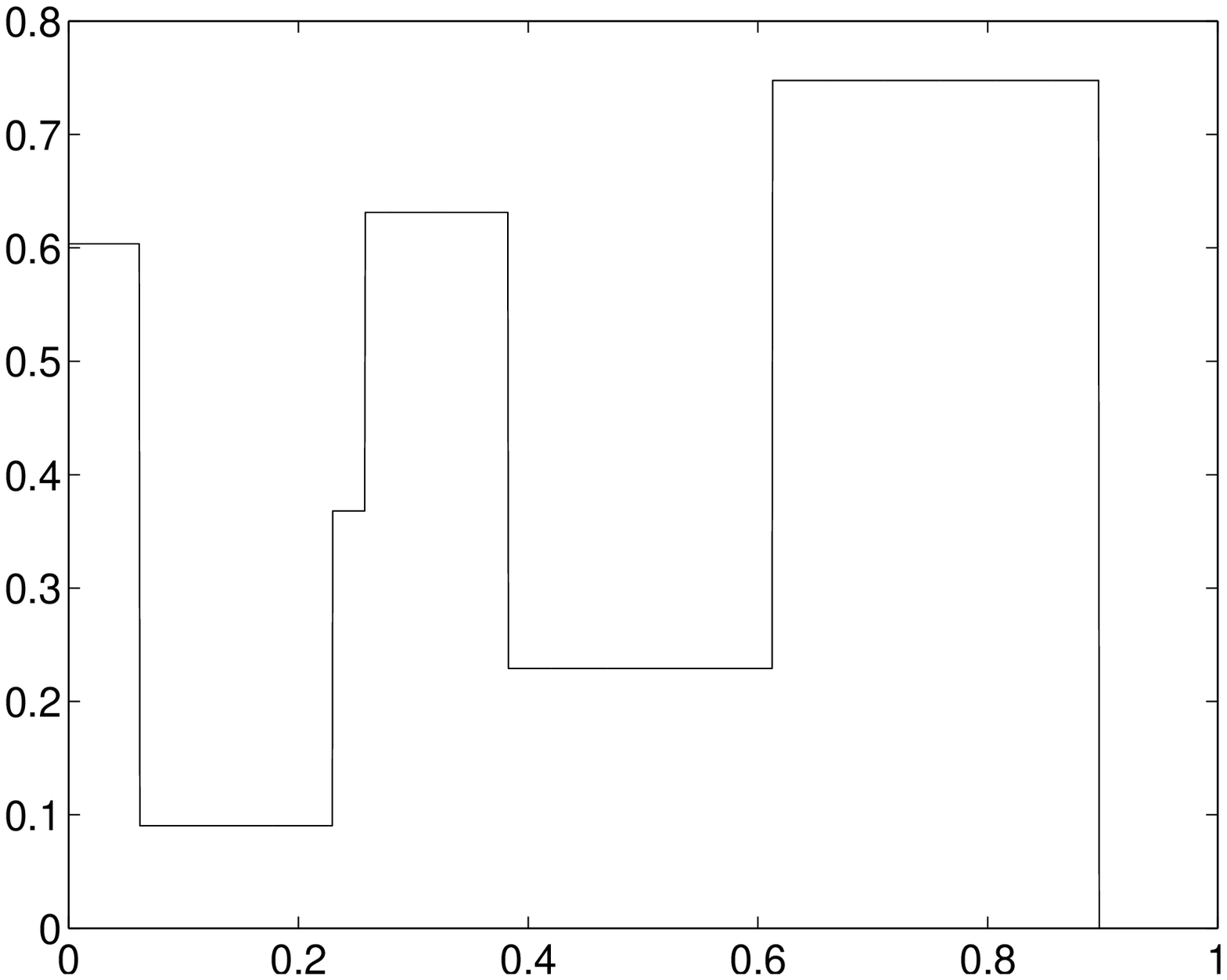}
\end{center}
\caption{\small{Chirp signal $f_1=\cos(2\pi x^2)$ (top left). Seismic
signal $f_2$ (top right). Cosine signal $f_3=\cos(8\pi x +
\phi(x))$ (bottom left)
 for the randomly generate phase $\phi(x)$ plotted in the
bottom right graph. The broken lines, indistinguishable from the
continue lines representing test signals, are the resulting
approximations up to error's norm ${\rm{tol}}_i=0.01||f_i||,\,i=1,2,3$.}}
\end{figure}

The three signals are to be approximated up to a
tolerance ${\rm{tol}}_i=0.01||f_i||,\,i=1,2,3$ for the norm of the
approximation's error.

We deal with the chirp signal $f_1$ on the interval $[0,8]$, by
discretizing it into $L=2049$ samples and applying Algorithm 1 to
produce the partition $\Del= T(f_1,9)$. The resulting number of
knots is $1162$, which is enough to approximate
 the signal, by a cubic B-spline basis for the space, within the above
specified precision ${\rm{tol}_1}$. A dictionary ${\mathcal
D}_4(\Del: \cup_{j=1}^{10}\Del_j)$ for the identical space is
constructed by considering 10 subpartitions, which 
yield  $N_1=1200$ functions.

The signal $f_2$ is a piece of $L=513$ data. A  partition of
cardinality $511$ is obtained as $\Del= T(f_1,8)$ and the dictionary
of cubic splines we have used arises by considering $3$
subpartitions, which yields a dictionary ${\mathcal
D}_4(\Del: \cup_{j=1}^{3}\Del_j)$ of cardinality $N_2= 521$.

The signal $f_3$ is discretized into $L=2049$ samples. The partition
$\Del= T(f_1,43)$ produces $732$ knots. Using $26$ subpartitions we
build a dictionary ${\mathcal D}_2(\Del: \cup_{j=1}^{26}\Del_j)$ of
$N_3=782$ linear B-spline functions.

Denoting by $\alpha^i_n, \,n=1,\ldots,N_i$ the atoms of the
$i$th-dictionary, we look now for the subsets of indices $\Gamma_i,
\,i=1,2,3$ of cardinality $K_i, \,i=1,2,3$ providing us with
a sparse representation of the signals. In other words,
we are interested in the approximations
\begin{eqnarray*}
f_i^{K_i}& =& \sum_{n\in\Gamma_i} c_n^i \alpha^i_n,\quad i=1,2,3,
\end{eqnarray*}
such that $|| f_i^{K_i} - f_i|| \le {\rm{tol}}_i,\ i=1,2,3,$ and the values
$K_i, \,i=1,2,3$ are satisfactory small for the approximation
to be considered sparse. Since the problem of finding
the sparsest solution is intractable, for all the signals we
look for a satisfactory sparse
representation using the same greedy strategy,
which evolves by selecting atoms through stepwise
minimization of the residual error as follows.

i)The atoms are selected one by one according to the Optimized
Orthogonal Matching Pursuit (OOMP) method \cite{oomp} until the
above defined tolerance for the norm of the residual error is
reached.

ii)The previous approximation is improved,  without greatly increasing
 the computational cost, by  a `swapping refinement' which at
each step interchanges one atom of the atomic decomposition with a
dictionary atom, provided that the operation decreases the norm of
the residual error\cite{swapping}.

iii)A Backward-Optimized Orthogonal Matching Pursuit (BOOMP)
method \cite{boomp} is applied to disregard some coefficients of
the atomic decomposition, in order to produce an approximation up
to the error of stage i). The last two steps are repeated until no
further swapping is possible.

Let us stress that, if steps ii) and iii) can be
executed at least once, the above strategy guarantees an improvement
upon the results of OOMP. The gain is
with respect to the number of
atoms involved in the approximation for the given error's norm.

The described technique is applied to all the non-orthogonal
dictionaries we have considered for comparison with the proposed
approach. The results are shown in Table 1. In the first column we
place the dictionaries to be compared. These are: 1) the spline
basis for the space adapted to the corresponding signal, as proposed
in Sec~4.  As already mentioned, for signals $f_1$ and $f_2$ we use
cubic B-splines and for signal $f_3$ the linear one. 2) The
dictionary for the identical spaces consisting of functions of
larger support. 3) The orthogonal cosine bases used by the discrete
cosine transform (dct). 4) The semi-orthogonal cardinal Chui-Wang
spline wavelet basis \cite{CW92} and 5) the Chui-Wang cardinal
spline dictionary for the same space \cite{ARN08}. For signals $f_1$
and $f_2$ we use cubic spline wavelets and for signal $f_3$ linear
spline wavelets.

\begin{table}[!t]
\caption{\small{Comparison of sparsity performance achieved by selecting
atoms from the non-uniform bases  and dictionaries for adapted
spline space (1st and 2nd rows), dft (3rd row), cardinal wavelet
bases and dictionaries (4th and 5th rows).} \label{tab1}}
\centering\vspace*{5mm}
\begin{tabular}{|l|c|c|c|}\hline
Dictionaries & $K^1$ (signal $f_1$)& $K^2$ (signal $f_2$) &$K^3$ (signal $f_3$)\\
\hline \hline
Non-uniform  spline basis  &1097 & 322 & 529 \\
Non-uniform  spline dictionary &173& 129 & 80 \\
\hline
Discreet cosine transform  & 263 & 208 & 669\\
\hline
Cardinal  Chui-Wang wavelet basis  & 246  & 201 &  97 \\
Cardinal  Chui-Wang wavelet  dictionary & 174 & 112 & 92 \\
\hline
\end{tabular}
\end{table}
The last three columns of Table 1 display the number of atoms
involved in the atomic decomposition for each test signal and for
each dictionary. These numbers clearly show a remarkable performance
of the approximation produced by the proposed non-uniform B-spline
dictionaries. Notice that whilst the non-uniform spline space is
adapted to the corresponding signal, only the dictionary for the
space achieves the sparse representation. Moreover
the performance is superior to that of the Chui-Wang
spline wavelet basis \cite{CW92} even for signal $f_3$, which was specially
selected because, due to the abrupt edges delimiting the smooth
lines, is very appropriate to be approximated by wavelets.
It is also worth stressing that  for signal
 $f_1$ and $f_2$ the performance is similar to the
cardinal Chui-Wang dictionary, which is known to render a very good
representation for these signals \cite{ARN08}.  However, whilst
the Chui-Wang cardinal spline wavelet dictionaries introduced in
\cite{ARN08} are significantly redundant with respect to the corresponding
basis (about twice as larger) the non-uniform B-spline dictionaries
introduced here contain a few more functions than the
basis. Nevertheless, as the examples  of this section indicate, the
improvement in the sparseness of the approximation a dictionary
may yield with respect to the B-spline basis is enormous. Still
there is the issue of establishing how to decide on the number of
subpartitions to be considered. For these numerical examples
the number of subpartitions was fixed as the one producing the best
result  when allowing  the number of subpartitions to vary within
some range.  It was observed that only for signal $f_2$ the optimum
number of subpartitions produced results significantly better
that all other values. Conversely, from signals $f_1$ and $f_3$
 some variations from the optimum number of subpartitions  still
produce comparable results.

\section{Conclusions}
Non-uniform B-spline dictionaries for adapted spline spaces have
been introduced. The proposed dictionaries are built by dividing a
given partition into subpartitions and merging the basis for the
concomitant subspaces. The dictionary functions are characterized by
having broader support than the basis functions for the identical
space. The uniform B-spline dictionaries proposed in
\cite{Bsplinedic} readily arise here as a particular case.

The capability of the non-uniform B-spline dictionaries to produce
sparse signal representation has been illustrated by recourse to
numerical simulations. Thus, we feel confident that a number of
applications could benefit from this construction, e.g., we
believe it could also be useful in Computer-Aided Geometry Design
(CAGD), for reducing control points and for finding sparse knot
sets of B-spline curves \cite{cagd,cagd1,cagd2}.
\subsection*{Acknowledgements}
This work has been fully supported by EPSRC, UK, grant
EP$/$D062632$/$1.

\end{document}